\begin{document}

\setlength{\topmargin}{0.0cm} \setlength{\textheight}{21.4cm}
\setlength{\textwidth}{15.6cm} \setlength{\oddsidemargin}{.4cm}
\setlength{\evensidemargin}{.4cm} \setlength{\parskip}{1ex}
\setlength{\parindent}{1cm}


\flushbottom
\newcommand{\bt}{\beta}
\newcommand{\al}{\alpha}
\newcommand{\laa}{\lambda_\alpha}
\newcommand{\lab}{\lambda_\beta}
\newcommand{\no}{|\Omega|}
\newcommand{\nd}{|D|}
\newcommand{\la}{\lambda}
\newcommand{\ro}{\varrho}
\newcommand{\cd}{\chi_{D}}
\newcommand{\cdc}{\chi_{D^c}}
\newcommand{\be}{\begin{equation}}
\newcommand{\ee}{\end{equation}}
\newcommand{\Om}{\Omega}
\newcommand{\h}{H^1_0(\Omega)}
\newcommand{\lt}{L^2(\Omega)}
\newtheorem{thm}{Theorem}[section]
\newtheorem{cor}[thm]{Corollary}
\newtheorem{lem}[thm]{Lemma}
\newtheorem{prop}[thm]{Proposition}
\theoremstyle{definition}
\newtheorem{defn}{Definition}[section]
\theoremstyle{remark}
\newtheorem{rem}{Remark}[section]
\numberwithin{equation}{section}
\renewcommand{\theequation}{\thesection.\arabic{equation}}
\numberwithin{equation}{section}

\title{ Extremal basic frequency of non-homogeneous plates}
\author{\bf Abbasali Mohammadi \\
\small  Department of Mathematics, College of Sciences,\\
\small Yasouj University, Yasouj, Iran, 75914-353\\
\small Email addresses: \\
\small mohammadi@yu.ac.ir,  abbasalimath@gmail.com }
\maketitle \hrule \textbf{Abstract.} In this paper we propose two numerical algorithms to derive the extremal principal eigenvalue
of the bi-Laplacian operator under Navier boundary conditions or Dirichlet boundary conditions. Consider a  non-homogeneous hinged or clamped plate $\Omega$,
the algorithms converge to the density functions on $\Omega$ which  yield the maximum or minimum basic frequency of the plate.\\\\
{\it Key Words:}  Bi-Laplacian; Eigenvalue Optimization; Rearrangement\\\\
{\it Mathematics Subject Classification:}35Q93; 35J58; 35P15; 34L16
\hrule

\section{Introduction}\label{intro}

 Eigenvalue problems for elliptic partial differential equations  have many applications in
engineering and applied sciences and these problems have been
intensively attractive to mathematicians in the past decades \cite{henrot}.

 This paper is concerned with  a fourth-order  elliptic eigenvalue problem modeling  the vibration of a non-homogeneous plate $\Omega$  which is either hinged
or clamped  along the boundary $\partial \Omega$. Several materials with $m$ different kinds of  densities $0<c_1<c_2<...<c_m$  are given where the area of the domain with density $c_i$ is $S_i>0$, $i=1..m$. The problem involves  geometrical constraints that can be described as $\sum_{i=1}^{m}S_i$ should be equal to the area of $\Om$.
 We investigate the location of these materials throughout $\Omega$ in order to optimize  the basic frequency
 in the vibration of the corresponding plate.

  Motivated by  the above explanation, we introduce the mathematical equations governing the structure and associated optimization problems. Let $\Omega$ be a bounded smooth domain in $\mathbb{R}^N$ and let $ \rho_0(x)=c_1\chi_{D_1}+...+c_m \chi_{D_m}$, the density function, be a measurable function such that $|D_i|=S_i>0$, $(i=1..m)$ and $\sum_{i=1}^{m}S_i=|\Om|$   where  $|.|$ stands for Lebesgue measure. Define $\mathcal{P}$ as the family of all measurable functions which  are  rearrangement of
  $\rho_0$.  For $\rho \in \mathcal{P} $, consider eigenvalue problems

  \begin{equation}  \label{mpde1}
\Delta^2 u= \lambda \rho u ,\quad \mathrm{in}\quad\Omega, \quad u=0,\, \Delta u=0,  \qquad
\mathrm{on}\quad\partial \Omega,
\end{equation}
 \begin{equation}  \label{mpde2}
\Delta^2 v= \Lambda \rho v ,\quad \mathrm{in}\quad\Omega, \quad v=0,\, \frac{\partial v}{\partial n}=0,  \qquad
\mathrm{on}\quad\partial \Omega,
\end{equation}
where $\lambda=\lambda_\rho,\;\Lambda= \Lambda_\rho$ are the first eigenvalues or the basic frequencies and $u=u(x), \; v=v(x)$ are the corresponding eigenfunctions or the lateral displacements. The operator $\Delta^2$ stands for usual bi-Laplacian, that is $\Delta^2 u=\Delta(\Delta u)$.  The principal eigenvalue $\lambda$ of problem
\eqref{mpde1} is obtained by minimizing the associate Rayleigh quotient

\begin{equation}  \label{ray1}
\lambda= \inf \{\frac{\int_{\Om} (\Delta w)^2 dx}{\int_{\Om}  \rho w^2 dx}: w \in H^2(\Om)\cap H^1_0(\Om), w\neq 0\},
\end{equation}
and the  first eigenvalue $\Lambda$ of problem \eqref{mpde2} is obtained by minimizing the associate Rayleigh quotient
\begin{equation}  \label{ray2}
\Lambda= \inf \{\frac{\int_{\Om} (\Delta w)^2 dx}{\int_{\Om}  \rho w^2 dx}: w \in H^2_0(\Om), w\neq 0\},
\end{equation}
where it is well known \cite{stru} that the inferior is attained in both cases. By regularity results the solutions to problems \eqref{mpde1} and \eqref{mpde2} belongs
to $H^4_{loc}(\Om)$ and these equations hold a.e. in $\Om$ \cite{agmon}.

 To determine the system's profile which gives  the maximum and minimum  principal eigenvalues, Cuccu \emph{et al}  have verified the following optimization problems
 \begin{equation}  \label{maxp1}
\max_{\rho \in  \mathcal{P} }\lambda_{\rho},
\end{equation}
\begin{equation}  \label{minp1}
\min_{\rho \in  \mathcal{P} }\lambda_{\rho},
\end{equation}
\begin{equation}  \label{maxp2}
\max_{\rho \in  \mathcal{P} }\Lambda_{\rho},
\end{equation}
\begin{equation}  \label{minp2}
\min_{\rho \in  \mathcal{P} }\Lambda_{\rho},
\end{equation}
in \cite{cuccu, cuccu2,anedda}. The existence of solutions for problems \eqref{maxp1} - \eqref{minp1} and  \eqref{minp2} have been proved for general domain $\Om$.  But, the existence of a solution for problem \eqref{maxp2} has been established when $\Om$ is a positivity preserving domain for $\Delta ^2 u$
under homogeneous Dirichlet boundary conditions. For instance, the ball is a domain that enjoys such a property. In spite of these existence results, the precise identifications of the maximums and minimums were found only in case $\Om$ is a ball.

 In eigenvalue optimization for elliptic partial differential equations, one of challenging mathematical problems after the problem of existence is an exact formula of the optimizer or optimal shape design. Most papers in this field answered this question just in case $\Om$ is a ball. For other domains qualitative properties of solutions were investigated and partial answers were given \cite{chanillo,conca,cuccu2011, derlet, kurata, lurian, abbasali,abbasali2}.
 From the physical point of view, it is important  to know the shape of the optimal density functions in case $\Om$ is not a ball.

  This class of problems is difficult to solve because of the lack of the topology information of the optimal shape. There must be
  numerical approaches to determine the optimal shape design. The mostly used  methods now are the homogenization method \cite{alliare} and the level set
  method \cite{osher}. The level set method is well known for its capability  to handle topological  changes, such as breaking one component into several, merging several components into one and forming sharp corners. This approach has been applied to the study of extremum problems of eigenvalues of inhomogeneous
  structures including the identification of composite membranes with extremum eigenvalues \cite{oshersantosa,hekao,zhu}, design of composite  materials with a desired spectral gap or maximal spectral gap \cite{kaoyab},
  finding optical devices that have a high quality factor \cite{kaosantosa2} and principle eigenvalue optimization in population biology \cite{kaoluo}.

   Recently, Kao and Su \cite{kao2} proposed an efficient rearrangement algorithm based on the Rayleigh quotient formulation of eigenvalues. They have solved minimization and maximization problem for the $k$-th eigenvalue ($k\geq 1$) and maximization of spectrum ratios of the second order elliptic differential operator in $\mathbb{R}^2$. We extend the approach to solve a fourth order partial differential equation. Most of the previous results are for second order
operators with Dirichlet boundary conditions and the geometric constraint have been considered for $m=2$. Here we study a partial differential equation with different boundary conditions  and we develop our algorithms for general $m\geq 2$. It is common in the literature that $\Om$ have been considered as a rectangle \cite{kong}.
To show the capacity and efficiency of the numerical method, we apply it to solve problems \eqref{maxp1}-\eqref{minp2} when $\Om$ has a more complicated geometrical structure than a rectangle. The algorithms  start from a given density $\rho_0$ and converge to the optimizers.

  Throughout this paper we shall write increasing instead  of non- decreasing, and decreasing instead of non- increasing.

 This section is closed with some definitions from  the rearrangement theory  related to our optimization problems. The reader can refer to  \cite{bu89,Alvino} 
for further information about rearrangement theory.
\begin{defn}\label{readef}
Two Lebesgue measurable functions  $\rho: \Om \rightarrow \Bbb{R}$, $\rho_0:\Om \rightarrow \Bbb{R}$, are said to be rearrangements of each other if\\
 \begin{equation}\label{rea}
|\{x\in \Om : \rho(x)\geq r \}|=|\{x\in \Om : \rho_0(x)\geq r\}|\qquad~\quad\forall r \in \mathbb{R}.
\end{equation}
\end{defn}
The notation $\rho\sim \rho_0$ means that $\rho$ and $\rho_0$ are rearrangements of each other. Consider $\rho_0:\Om \rightarrow \Bbb{R}$, the class of rearrangements generated by $\rho_{0}$, denoted $\mathcal{P}$, is defined as follows\\
 \begin{equation*}
\mathcal{P}=\{\rho:\rho\sim \rho_{0}\}.
\end{equation*}
\section{Numerical Algorithms}
 In this section we describe  the algorithms that  start from a given initial density function and converge  to the optimizers. Consider the case that $m$ different materials with densities $0<c_1<c_2<...<c_m$ are distributed in arbitrary
 pairwise disjoint measurable subsets $D_i$, $i=1..m$, respectively of $\Omega$ so that $\cup_1^m D_i= \Om$ and $|D_i|=S_i$. Then, set  $ \rho_0(x)=c_1\chi_{D_1}+...+c_m \chi_{D_m}$ and we have the following technical assertion.
 \begin{lem}\label{chirho}
  Function $\rho$ belongs to the rearrangement class $\mathcal{P}$ if and only if  $\rho=c_1\chi_{D^\prime_1}+...+c_m \chi_{D^\prime_m}$ where $D^\prime_i$, $i=1..m$, are  pairwise disjoint subsets of $\Om$ such that $|D^\prime_i|=S_i$ and $\cup_1^m D^\prime_i= \Om$.
 \end{lem}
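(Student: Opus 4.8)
The plan is to prove both implications of the equivalence by working directly with the defining property of rearrangement, Definition \ref{readef}, using the fact that $\rho_0$ takes only the finitely many values $c_1 < c_2 < \dots < c_m$.

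\textbf{The ``if'' direction.} Suppose $\rho = c_1\chi_{D'_1} + \dots + c_m\chi_{D'_m}$ with the $D'_i$ pairwise disjoint, $\cup_1^m D'_i = \Om$, and $|D'_i| = S_i$. Then $\rho$ takes the value $c_i$ precisely on $D'_i$, so for any $r \in \mathbb{R}$ the superlevel set $\{x \in \Om : \rho(x) \geq r\}$ is a finite union of the sets $D'_i$ with $c_i \geq r$, whence $|\{\rho \geq r\}| = \sum_{c_i \geq r} S_i$. The same computation applied to $\rho_0$ (whose superlevel set is $\cup_{c_i \geq r} D_i$, again with $|D_i| = S_i$) gives exactly the same number. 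Since this holds for every $r$, equation \eqref{rea} is satisfied and $\rho \sim \rho_0$, i.e.\ $\rho \in \mathcal{P}$.

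\textbf{The ``only if'' direction.} Conversely, suppose $\rho \in \mathcal{P}$, so $\rho \sim \rho_0$. The key observation is that the distribution function $r \mapsto |\{\rho \geq r\}|$ of $\rho$ equals that of $\rho_0$, and the latter is a step function that takes the value $\sum_{c_i \geq r} S_i$ and jumps only at $r = c_1, \dots, c_m$; in particular it equals $|\Om|$ for $r \leq c_1$ and equals $0$ for $r > c_m$. From $|\{\rho \geq r\}| = |\Om|$ for all $r \leq c_1$ we get $\rho \geq c_1$ a.e.; from $|\{\rho \geq r\}| = 0$ for $r > c_m$ we get $\rho \leq c_m$ a.e. More importantly, comparing the values just below and just above each $c_i$ shows $|\{\rho = c_i\}| = |\{c_i \leq \rho < c_{i+1}\}| = |\{\rho \geq c_i\}| - |\{\rho \geq c_{i+1}\}| = \sum_{j \geq i} S_j - \sum_{j \geq i+1} S_j = S_i$ (with the convention $c_{m+1} = +\infty$), and likewise $|\{c_i < \rho < c_{i+1}\}| = 0$ for each $i$, so $\rho$ takes no values strictly between consecutive $c_i$'s. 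Hence, up to a null set, $\rho$ takes only the values $c_1, \dots, c_m$; setting $D'_i = \{x \in \Om : \rho(x) = c_i\}$ gives pairwise disjoint measurable sets with $|D'_i| = S_i$, $\cup_1^m D'_i = \Om$ (modulo a null set, which can be absorbed into, say, $D'_1$), and $\rho = \sum_{i=1}^m c_i \chi_{D'_i}$.

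\textbf{Main obstacle.} The argument is essentially bookkeeping with distribution functions; the only point requiring a little care is handling the a.e.\ ambiguities — showing that $\rho$ cannot charge the open intervals $(c_i, c_{i+1})$ and that the exceptional null sets can be redistributed so that the $D'_i$ genuinely partition $\Om$ with the exact measures $S_i$. This is where the discreteness of the range of $\rho_0$ is used crucially; for a general $\rho_0$ the conclusion would fail, but here the finite-valued structure makes the superlevel-set measures determine $\rho$ up to a null set.
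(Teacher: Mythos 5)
Your proof is correct and follows essentially the same route as the paper: both directions come down to bookkeeping with the (super)level-set measures supplied by Definition \ref{readef}, showing $\rho$ takes each value $c_i$ on a set of measure $S_i$ and nothing else up to a null set; your write-up is in fact more explicit than the paper's terse computation of $|\{\rho=r\}|=|\{\rho_0=r\}|$, and your handling of the null-set ambiguity is a welcome extra detail rather than a departure in method.
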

 \begin{proof}
  Assume $\rho \in \mathcal{P}$. In view of definition \ref{readef},
  \begin{align*}
|\{x\in \Omega: \rho_0(x)= r\}|=|\cap_1 ^\infty \{x\in \Omega: r \leq \rho_0(x)< r +\dfrac{1}{n}\}|\\
=\lim_{n\rightarrow \infty}
|\{x\in \Omega:  \rho_0(x)\geq r\}|- |\{x\in \Omega:  \rho_0(x) \geq r +\dfrac{1}{n}\}|\\
=\lim_{n\rightarrow \infty}
|\{x\in \Omega:  \rho(x)\geq r\}|- |\{x\in \Omega:  \rho(x) \geq r +\dfrac{1}{n}\}|\\
=|\cap_1 ^\infty \{x\in \Omega: r \leq \rho(x)< r +\dfrac{1}{n}\}|= |\{x\in \Omega: \rho(x)= r\}|,
\end{align*}
  where it means that the
  level sets of $\rho$ and $\rho_0$  have the same measures and this yields the assertion. The other part of the theorem is concluded from
  definition \ref{readef}.
 \end{proof}

 Now, we propose  algorithms  to derive the solutions of problems \eqref{maxp1}-\eqref{minp2} respectively. For the maximization problems, we start from a given
initial density functions $\rho_0 $ and  extract new  density functions  $\rho_1$ using the eigenfunction of equations \eqref{mpde1}- \eqref{mpde2} such that the first eigenvalues are
increased, i.e.
\begin{equation*}
\lambda_{\rho_0}\leq\lambda_{\rho_1},\quad \Lambda_{\rho_0}\leq\Lambda_{\rho_1}.
\end{equation*}
In the same spirit as in the maximization cases, we initiate from   a given density functions $\rho_0$ and extract another density functions $\rho_1  $ such that principal eigenvalues of \eqref{mpde1}- \eqref{mpde2} are decreased, i.e.
\begin{equation*}
\lambda_{\rho_0}\geq\lambda_{\rho_1},\quad \Lambda_{\rho_0}\geq\Lambda_{\rho_1}.
\end{equation*}

 The algorithms strongly based on the Rayliegh quotients
in formulas \eqref{ray1}-\eqref{ray2}. Such  algorithms  have
been applied successfully to minimize eigenvalues of some second order elliptic operators \cite{chanillo,kao1,kao2}. They rely on the variational formulation of
the eigenvalues and use level sets of the eigenfunctions or gradient
of them.  Employing the
level sets of the eigenfunctions, we need some
results of the rearrangement theory with an eye on our problem \cite{bu89}.

\begin{lem}\label{ber}
Let $\mathcal{P}$ be the set of rearrangements of a fixed function
$\rho_{0}\in L^r(\Omega)$, $r>1$, $\rho_{0}\not\equiv 0$, and let $q\in
L^s(\Omega)$, $s=r/(r-1)$, $q\not\equiv 0$. If there is an
increasing function $\xi:\mathbb{R}\rightarrow \mathbb{R}$  such that $\xi(q)\in \mathcal{P}$, then
\begin{equation*}
\int_{\Omega} \rho q dx \leq \int_{\Omega} \xi(q)q dx~~~~~~~
\qquad\qquad\forall ~\rho \in\mathcal{P},
\end{equation*}
and the function $\xi(q)$ is the unique maximizer relative to
$\mathcal{P}$. Furthermore, if there is a decreasing
function $\eta:\mathbb{R}\rightarrow \mathbb{R}$ such that $\eta(q)\in \mathcal{P}$, then
\begin{equation*}
\int_{\Omega} \rho q dx \geq \int_{\Omega} \eta(q)q
dx~~~~~~~\qquad\qquad \forall ~\rho \in\mathcal{P},
\end{equation*}
and the function $\eta(q)$ is the unique minimizer relative to
$\mathcal{P}$.
\end{lem}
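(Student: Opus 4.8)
The plan is to reduce the statement to the classical rearrangement inequality of Burton (see \cite{bu89}) by verifying its hypotheses for our particular setting, where $\rho_0$ is a simple function and $q$ will be a measurable function (in applications, a power of the eigenfunction or of its Laplacian). First I would recall the general principle: for $q\in L^s(\Omega)$ with $s$ conjugate to $r$, the linear functional $\rho\mapsto\int_\Omega\rho q\,dx$ on $\mathcal{P}$ attains its maximum, and if a level-set-type function $\xi(q)$ built from an increasing $\xi$ happens to lie in $\mathcal{P}$, then it must coincide with that maximizer. So the argument is in two movements: (i) show the maximum is attained and characterized by a monotonicity condition relating $\rho$ and $q$ on their level sets, and (ii) observe that $\xi(q)$ satisfies exactly that condition, hence equals the maximizer; uniqueness follows because $c_1<\dots<c_m$ forces the maximizing arrangement to be essentially unique wherever $q$ has no flat pieces of positive measure, and the hypothesis $\xi(q)\in\mathcal{P}$ handles the flat pieces by fiat.

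In more detail, for the maximization half I would argue as follows. Write $\rho=c_1\chi_{D'_1}+\dots+c_m\chi_{D'_m}$ using Lemma~\ref{chirho}. Maximizing $\int_\Omega\rho q\,dx$ over $\mathcal{P}$ amounts to assigning the largest density $c_m$ to the set of measure $S_m$ on which $q$ is largest, the next density $c_{m-1}$ to the set of measure $S_{m-1}$ on which $q$ is next largest, and so on; this is a finite "bathtub"-type rearrangement and the optimal value is $\int_\Omega \tilde\rho q\,dx$ where $\tilde\rho$ is the function that is constant on super-level sets of $q$ and decreasing in $q$... wait, increasing in $q$: larger $q$ gets larger density. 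Since $\xi$ is increasing, $\xi(q)$ is precisely such a function — it is constant on level sets of $q$ and its values increase with $q$ — and by assumption $\xi(q)\in\mathcal{P}$, so $\xi(q)$ realizes the same layer-cake structure with the correct measures $S_i$; therefore $\int_\Omega\rho q\,dx\le\int_\Omega\xi(q)q\,dx$ for all $\rho\in\mathcal{P}$. For uniqueness, suppose $\rho^\ast\in\mathcal{P}$ also attains the maximum. On each set where the optimal arrangement is forced (i.e. where $q$ takes values separating the thresholds between the $S_i$-layers), $\rho^\ast$ must agree with $\xi(q)$ a.e.; on a level set $\{q=t\}$ of positive measure the value of $\xi(q)$ is a single constant $c_j$, and since $\xi(q)\in\mathcal{P}$ already allocates the full measure correctly, any competitor matching the maximum must also be constant there, giving $\rho^\ast=\xi(q)$ a.e. The minimization half is identical after replacing $q$ by $-q$ (or $\xi$ by $\eta$ and reversing inequalities): a decreasing $\eta$ assigns the smallest density to the largest values of $q$, which is exactly the minimizing arrangement.

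The main obstacle is the uniqueness claim, specifically handling level sets of $q$ of positive Lebesgue measure, on which the functional is indifferent to how a given density value is distributed. The existence of the increasing (resp. decreasing) function $\xi$ (resp. $\eta$) with $\xi(q)\in\mathcal{P}$ is exactly the hypothesis that rules out any genuine ambiguity: it certifies that the measures of the pieces fit together consistently, so that the "greedy" layer-cake assignment is actually achievable within $\mathcal{P}$ and is forced. I would therefore lean on the precise version of Burton's theorem in \cite{bu89}, which already packages this uniqueness-under-a-monotone-relation statement, and simply check that our $\rho_0\in L^r(\Omega)$ (indeed $\rho_0\in L^\infty$ since it is simple) and the given $q$ meet its integrability hypotheses, so that \cite[Theorem~...]{bu89} applies verbatim. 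The remaining content is then bookkeeping: translating "level sets of $q$" into the sets $D'_i$ of Lemma~\ref{chirho} and confirming the direction of the inequality matches the monotonicity of $\xi$ and $\eta$.
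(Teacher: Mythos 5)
The paper gives no proof of Lemma~\ref{ber} at all: it is quoted as a known result from Burton's rearrangement theory \cite{bu89} (see also \cite{Alvino}), which is exactly where your proposal ultimately lands, so your approach coincides with the paper's. The self-contained sketch you interleave (the bathtub assignment via Lemma~\ref{chirho} and the level-set uniqueness discussion) is looser than Burton's general statement --- it silently restricts to simple $\rho_0$ and leaves the uniqueness on flat pieces of $q$ imprecise --- but since you fall back on \cite{bu89} verbatim after checking the integrability hypotheses, this does not affect correctness.
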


 Next two lemmas provide our main tool for constructing the numerical algorithms.
\begin{lem}\label{bathtub1}
Let $f(x)$ be a nonnegative function in $L^1(\Om)$ such that its level sets have measure zero. Then the minimization problem
\begin{equation}\label{bathtubinf}
\inf_{\rho \in \mathcal{P}} \int_{\Om}\rho f dx,
\end{equation}
is uniquely solvable by $\widehat{\rho}(x)=c_1\chi_{\widehat{D}_1}+...+c_m\chi_{\widehat{D}_m}$ where $|\widehat{D}_i|=S_i$, $i=1..m$, and
\begin{equation*}
\widehat{D}_1=\{x\in \Om:\,\:f(x)\geq t_1\},
\end{equation*}
\begin{align*}
t_1=\sup\{s\in \mathbb{R} : |\{x\in \Om:\,\:f(x)\geq s\}|\geq S_1\},
\end{align*}
\begin{equation*}
\widehat{D}_i=\{x\in \Om\backslash \cup _{j=1}^{i-1} \widehat{D_j}:\,\:f(x)\geq t_i\},
\end{equation*}
\begin{align*}
t_i=\sup\{s\in \mathbb{R} : |\{x\in \Om\backslash \cup _{j=1}^{i-1} \widehat{D_j}:\,\:f(x)\geq s\}|\geq S_i\},
\end{align*}
for $i=2..m-1$.
\end{lem}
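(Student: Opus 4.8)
The plan is to reduce \eqref{bathtubinf} to the one–set bathtub principle by writing an $m$–valued density as a nonnegative combination of indicator functions of \emph{nested} sublevel sets of $f$. First I would dispose of the preliminaries. Every $\rho\in\mathcal P$ satisfies $c_1\le\rho\le c_m$ and $f\in L^1(\Om)$, so $\int_\Om\rho f\,dx$ is finite and nonnegative and the infimum is a well–defined real number. The hypothesis that the level sets of $f$ are null means that $f>0$ a.e.\ and, crucially, that the distribution function $s\mapsto|\{x\in\Om:f(x)\ge s\}|$ is continuous; this continuity makes the thresholds well defined with $t_1>t_2>\cdots>t_{m-1}$ and forces $|\widehat D_i|=S_i$ \emph{exactly} for $i=1,\dots,m-1$. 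Setting $\widehat D_m:=\Om\setminus\bigcup_{j=1}^{m-1}\widehat D_j$ one gets $|\widehat D_m|=|\Om|-\sum_{j=1}^{m-1}S_j=S_m$, so the $\widehat D_i$ are pairwise disjoint, cover $\Om$, and have the prescribed measures; hence $\widehat\rho\in\mathcal P$ by Lemma \ref{chirho}.

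Next comes the layering. Put $c_0:=0$ and $b_k:=c_k-c_{k-1}>0$ for $k=1,\dots,m$. Given an arbitrary $\rho=c_1\chi_{D'_1}+\cdots+c_m\chi_{D'_m}\in\mathcal P$ in the form provided by Lemma \ref{chirho}, let $E'_k:=\bigcup_{i=k}^{m}D'_i=\Om\setminus\bigcup_{i=1}^{k-1}D'_i$, so that $|E'_k|=\sigma_k:=\sum_{i=k}^{m}S_i$ and, pointwise, $\rho=\sum_{k=1}^{m}b_k\chi_{E'_k}$. Carrying out the same decomposition for $\widehat\rho$ with $\widehat E_k:=\bigcup_{i=k}^{m}\widehat D_i$, a short induction on the construction of the $\widehat D_i$ identifies $\bigcup_{i=1}^{k-1}\widehat D_i$ with $\{x\in\Om:f(x)\ge t_{k-1}\}$ up to a null set, so that $\widehat E_k=\{x\in\Om:f(x)<t_{k-1}\}$ is a genuine sublevel set of $f$ with $|\widehat E_k|=\sigma_k$ (and $\widehat E_1=\Om$). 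Since $E'_1=\widehat E_1=\Om$, one gets
\begin{equation*}
\int_\Om\rho f\,dx-\int_\Om\widehat\rho f\,dx=\sum_{k=2}^{m}b_k\Big(\int_{E'_k}f\,dx-\int_{\widehat E_k}f\,dx\Big),
\end{equation*}
and as each $b_k>0$ it suffices to prove, for every $k$, that $\int_{E}f\,dx\ge\int_{\widehat E_k}f\,dx$ whenever $E\subseteq\Om$ is measurable with $|E|=\sigma_k$.

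This last inequality I would prove by hand: since $|E|=|\widehat E_k|$ we have $|E\setminus\widehat E_k|=|\widehat E_k\setminus E|$, while $f\ge t_{k-1}$ on $E\setminus\widehat E_k\subseteq\Om\setminus\widehat E_k$ and $f<t_{k-1}$ on $\widehat E_k\setminus E\subseteq\widehat E_k$, so
\begin{equation*}
\int_{E}f\,dx-\int_{\widehat E_k}f\,dx=\int_{E\setminus\widehat E_k}f\,dx-\int_{\widehat E_k\setminus E}f\,dx\ge t_{k-1}|E\setminus\widehat E_k|-t_{k-1}|\widehat E_k\setminus E|=0 .
\end{equation*}
Summing over $k$ shows $\int_\Om\rho f\,dx\ge\int_\Om\widehat\rho f\,dx$, so $\widehat\rho$ is a minimizer. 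For uniqueness, if $\rho\in\mathcal P$ also attains the infimum then every summand above vanishes, in particular the inequality in the displayed chain with $E=E'_k$ is an equality for each $k\ge2$; this forces $\int_{\widehat E_k\setminus E'_k}f\,dx=t_{k-1}|\widehat E_k\setminus E'_k|$, which together with $f<t_{k-1}$ on $\widehat E_k\setminus E'_k$ gives $|\widehat E_k\setminus E'_k|=0$, and hence also $|E'_k\setminus\widehat E_k|=0$ because the two sets have equal measure. Thus $E'_k=\widehat E_k$ up to null sets for all $k$, and therefore $\rho=\sum_k b_k\chi_{E'_k}=\sum_k b_k\chi_{\widehat E_k}=\widehat\rho$ a.e.

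The step I expect to demand the most care is the bookkeeping of the layering: turning the $m$–level step function into a nonnegative combination of indicators of the nested sets $E'_k$, and matching this with the identification of each $\widehat E_k$ as a true sublevel set $\{f<t_{k-1}\}$ of the prescribed measure $\sigma_k$. Once that reduction is in place, the measure–zero hypothesis on the level sets of $f$ does everything else — it supplies the continuity of the distribution function that makes the thresholds $t_i$ produce exact measures, and it is what converts the equality case of the elementary estimate into uniqueness. (One could instead try to quote Lemma \ref{ber} with a decreasing $\eta$ satisfying $\eta(f)=\widehat\rho$, but that lemma requires $f\in L^s$ for some $s>1$, which is not assumed here, so the direct argument above is the natural route.)
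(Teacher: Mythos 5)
Your proof is correct, but it follows a genuinely different route from the paper. The paper's proof is short: it defines the explicit decreasing step function $\eta$ (equal to $c_1$ on $[t_1,\infty)$, $c_2$ on $[t_2,t_1)$, \dots, $c_m$ otherwise), observes that $\eta(f)=c_1\chi_{\widehat D_1}+\dots+c_m\chi_{\widehat D_m}\in\mathcal P$ by Lemma \ref{chirho}, and then invokes Lemma \ref{ber} (Burton's rearrangement lemma) to get both the minimality and the uniqueness in one stroke. You instead decompose every competitor $\rho=\sum_k b_k\chi_{E'_k}$ over the nested sets $E'_k$, identify each $\widehat E_k$ as an exact sublevel set $\{f<t_{k-1}\}$ of measure $\sigma_k$, and prove the one-set bathtub inequality and its equality case by hand. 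What each buys: the paper's argument is more economical, outsourcing the analysis to a standard rearrangement result; yours is self-contained and, as you note, valid under the stated hypothesis $f\in L^1(\Om)$ alone, whereas Lemma \ref{ber} formally requires $q\in L^s(\Om)$ for some $s>1$, so the paper's proof has a small integrability mismatch with the lemma as stated (harmless in the intended application, where $f=u_0^2$ with $u_0\in H^2(\Om)$, but your direct route avoids it). You also make explicit, via continuity of the distribution function, why $|\widehat D_i|=S_i$ exactly and why $t_1>t_2>\dots>t_{m-1}$, points the paper only asserts; and your equality-case analysis gives uniqueness (up to null sets) without appealing to the uniqueness clause of Lemma \ref{ber}.
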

\begin{proof}
Let us rearrange the super level sets of the function $f(x)$ with an eye on the sets $\widehat{D_i}$, $i=1..m$. We should mention here that the pairwise disjoint sets $\widehat{D_i}$, $i=1..m$ are determined uniquely since the level sets of the function $f(x)$ have  measure zero. In addition,
\begin{equation*}
t_1>t_2>...>t_{m-1},
\end{equation*}
since we have $S_i>0$,  $i=1..m$. This motivates the following decreasing function
\begin{equation*}
 \eta(t)=
 \left\{
     \begin{array}{ll}
      c_1\quad\;\;\;\quad t\geq t_1,
       \\ c_2 \quad \quad  \;\; \;     t_2\leq  t < t_1,
       \\ \vdots
       \\ c_{m-1} \quad \quad       t_{m-1}\leq  t < t_{m-2},
       \\ c_m \quad  \quad \quad otherwise,
     \end{array}
   \right.
 \end{equation*}
 where it yields
 \begin{equation*}
 \eta(f(x))=c_1\chi_{\widehat{D}_1}+...+c_m\chi_{\widehat{D}_m}.
  \end{equation*}
 Invoking lemma \ref{chirho}, this function belongs to the rearrangement class $\mathcal{P}$. According to lemma \ref{ber}, $ \eta(f(x))$ is the unique minimizer of the problem \eqref{bathtubinf}.
\end{proof}

\begin{lem}\label{bathtub2}
Let $f(x)$ be a nonnegative function in $L^1(\Om)$ such that its level sets have measure zero. Then the maximization problem
\begin{equation}\label{bathtubsup}
\sup_{\rho \in \mathcal{P}} \int_{\Om}\rho f dx,
\end{equation}
is uniquely solvable by $\widehat{\rho}_(x)=c_1\chi_{\widehat{D}_1}+...+c_m\chi_{\widehat{D}_m}$ where $|\widehat{D}_i|=S_i$, $i=1..m$, and
\begin{equation*}
\widehat{D}_1=\{x\in \Om:\,\:f(x)\leq t_1\},
\end{equation*}
\begin{align*}
t_1=\inf\{s\in \mathbb{R} : |\{x\in \Om:\,\:f(x)\leq s\}|\geq S_1\},
\end{align*}
\begin{equation*}
\widehat{D}_i=\{x\in \Om\backslash \cup _{j=1}^{i-1} \widehat{D_j}:\,\:f(x)\leq t_i\},
\end{equation*}
\begin{align*}
t_i=\inf\{s\in \mathbb{R} : |\{x\in \Om\backslash \cup _{j=1}^{i-1} \widehat{D_j}:\,\:f(x)\leq s\}|\geq S_i\},
\end{align*}
for $i=2..m-1$.
\end{lem}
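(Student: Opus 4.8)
The plan is to mirror the proof of Lemma~\ref{bathtub1} almost verbatim, exchanging the roles of super-level sets and sub-level sets. First I would observe that, exactly as in the minimization case, the hypothesis that every level set of $f$ has measure zero forces the sets $\widehat{D}_i$ to be determined uniquely: the map $s\mapsto |\{x\in\Om:\,f(x)\le s\}|$ is continuous (no jumps, precisely because level sets are null) and nondecreasing, so the infimum defining $t_1$ is attained and $|\widehat{D}_1|=|\{f\le t_1\}|=S_1$; iterating this on $\Om\backslash\cup_{j=1}^{i-1}\widehat{D}_j$ gives $|\widehat{D}_i|=S_i$ for $i=2..m-1$, and the leftover set $\widehat{D}_m:=\Om\backslash\cup_{j=1}^{m-1}\widehat{D}_j$ has measure $S_m$ by the area constraint $\sum S_i=\no$. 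Then I would note the ordering
\begin{equation*}
t_1<t_2<\cdots<t_{m-1},
\end{equation*}
which holds because each $S_i>0$ forces the cumulative thresholds to strictly increase.

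Next I would build the \emph{increasing} step function
\begin{equation*}
\xi(t)=
\left\{
\begin{array}{ll}
c_1 & t\le t_1,\\
c_2 & t_1<t\le t_2,\\
\vdots &\\
c_{m-1} & t_{m-2}<t\le t_{m-1},\\
c_m & \text{otherwise},
\end{array}
\right.
\end{equation*}
and check that $\xi(f(x))=c_1\chi_{\widehat{D}_1}+\cdots+c_m\chi_{\widehat{D}_m}$, using the definitions of the $\widehat{D}_i$ above (the ``otherwise'' branch, $t>t_{m-1}$, corresponds exactly to $\widehat{D}_m$). By Lemma~\ref{chirho} this function lies in $\mathcal{P}$, since it is a combination $c_1\chi_{\widehat D_1}+\cdots+c_m\chi_{\widehat D_m}$ over pairwise disjoint sets covering $\Om$ with the prescribed measures. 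Finally, since $\xi$ is increasing and $\xi(f)\in\mathcal{P}$, the first part of Lemma~\ref{ber} (applied with $q=f$, which is admissible because $f\in L^1(\Om)$ and $\rho_0\in L^\infty(\Om)$, so the integrability pairing is fine) gives $\int_\Om \rho f\,dx\le \int_\Om \xi(f)f\,dx$ for all $\rho\in\mathcal{P}$, with $\xi(f)=\widehat{\rho}$ the unique maximizer. This establishes the claim.

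The only genuinely delicate point — and the one I would spell out carefully — is the well-definedness and the measure identities for the $\widehat{D}_i$: one must verify that the ``$\ge S_i$'' inside the infimum, combined with the measure-zero level-set hypothesis, yields \emph{equality} $|\widehat{D}_i|=S_i$ rather than a strict inequality, and that the recursively removed sets never run out of room prematurely. Everything else is a direct transcription of Lemma~\ref{bathtub1}'s argument with inequalities reversed and ``$\sup$'' replaced by ``$\inf$''. For brevity, in the written proof I would simply say that the argument is ``identical to that of Lemma~\ref{bathtub1}, with the decreasing rearrangement $\eta$ replaced by the increasing rearrangement $\xi$ and the super-level sets replaced by the sub-level sets,'' and then invoke the maximizer half of Lemma~\ref{ber}.
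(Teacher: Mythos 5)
Your proposal is correct and follows exactly the route the paper intends: the paper simply states that the proof is analogous to that of Lemma~\ref{bathtub1}, and your argument is that mirrored proof spelled out, with sub-level sets, the increasing step function $\xi$, Lemma~\ref{chirho} to place $\xi(f)$ in $\mathcal{P}$, and the maximizer half of Lemma~\ref{ber}. Your extra care about the equalities $|\widehat{D}_i|=S_i$ via continuity of $s\mapsto|\{f\le s\}|$ is a welcome refinement of the same approach, not a departure from it.
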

\begin{proof}
The proof is  similar to that of lemma \ref{bathtub1} and is omitted.
\end{proof}

Lemmas \ref{bathtub1} and \ref{bathtub2} allow us to derive  sequences of density functions  $\rho_n$
such that corresponding eigenvalues $\la(\rho_n)$ and $\Lambda(\rho_n)$
 are monotone  sequences of eigenvalues. To do this, we need the following theorem.

\begin{thm}\label{decseq}
 Assume $ \rho_0(x)=c_1\chi_{D_1}+...+c_m \chi_{D_m}$ is a member of $\mathcal{P}$. Then, there exist two functions $ \rho_1$ and $\rho^\prime_1$ in the rearrangement class $\mathcal{P}$
 such that
\begin{equation*}
\la( \rho_0)\geq \la( \rho_1), \quad \Lambda( \rho_0)\geq \Lambda( \rho^\prime_1).
\end{equation*}
\end{thm}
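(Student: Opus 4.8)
The idea is to use the eigenfunction of $\rho_0$ together with the bathtub-type rearrangements of Lemmas \ref{bathtub1} and \ref{bathtub2} to produce a strictly (or weakly) better density. Let $u_0$ be the principal eigenfunction associated with $\rho_0$ in \eqref{mpde1}, normalized so that $\int_\Om \rho_0 u_0^2\,dx = 1$; then
\begin{equation*}
\la(\rho_0) = \int_\Om (\Delta u_0)^2\,dx.
\end{equation*}
Set $f(x) = u_0^2(x)$, which is nonnegative and in $L^1(\Om)$. The plan is to take $\rho_1 = \widehat{\rho}$, the unique minimizer of $\int_\Om \rho f\,dx$ over $\mathcal{P}$ furnished by Lemma \ref{bathtub1} (placing the large densities $c_m$ where $u_0^2$ is small and the small density $c_1$ where $u_0^2$ is large). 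Then, using $u_0$ as a trial function in the Rayleigh quotient \eqref{ray1} for the density $\rho_1$,
\begin{equation*}
\la(\rho_1) \le \frac{\int_\Om (\Delta u_0)^2\,dx}{\int_\Om \rho_1 u_0^2\,dx} \le \frac{\int_\Om (\Delta u_0)^2\,dx}{\int_\Om \rho_0 u_0^2\,dx} = \la(\rho_0),
\end{equation*}
where the second inequality holds because $\int_\Om \rho_1 u_0^2\,dx = \inf_{\rho\in\mathcal{P}}\int_\Om \rho u_0^2\,dx \le \int_\Om \rho_0 u_0^2\,dx$. The same argument applied verbatim with the principal eigenfunction $v_0$ of \eqref{mpde2} (now using the Rayleigh quotient \eqref{ray2} over $H^2_0(\Om)$) produces $\rho^\prime_1$, again via Lemma \ref{bathtub1} with $f = v_0^2$, and gives $\Lambda(\rho^\prime_1) \le \Lambda(\rho_0)$.

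One technical point must be addressed before Lemma \ref{bathtub1} applies: that lemma requires the level sets of $f = u_0^2$ (resp. $v_0^2$) to have measure zero. This follows from the regularity and unique continuation properties of solutions of the bi-Laplacian eigenvalue problem — by the regularity results cited after \eqref{ray2}, $u_0 \in H^4_{loc}(\Om)$, so $u_0$ is smooth in the interior, and since $u_0$ is a nontrivial solution of $\Delta^2 u_0 = \la \rho_0 u_0$, it cannot be constant on a set of positive measure (on such a set $\Delta^2 u_0 = 0$ would force $\la\rho_0 u_0 = 0$ there, and one rules out $u_0 \equiv$ const $\ne 0$ as well as, via unique continuation, $u_0 \equiv 0$). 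Hence each level set $\{u_0^2 = c\}$ with $c > 0$ has measure zero; the set $\{u_0 = 0\}$ also has measure zero by the same unique continuation argument. I would state this as a preliminary observation (or cite it) so that Lemmas \ref{bathtub1}--\ref{bathtub2} may be invoked cleanly.

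The construction above is the heart of the matter; everything else is bookkeeping. The main obstacle — the only place where genuine input beyond the variational principle is needed — is precisely the measure-zero property of the level sets of the eigenfunctions, i.e. guaranteeing the hypothesis of Lemma \ref{bathtub1}. If one wanted to avoid unique continuation entirely, an alternative is to note that it suffices to minimize $\int_\Om \rho u_0^2\,dx$ over $\mathcal{P}$; even without the measure-zero hypothesis the infimum is attained (the functional is weakly-$*$ continuous on the weak-$*$ closure of $\mathcal{P}$, which for this piecewise-constant $\rho_0$ consists again of rearrangements by Lemma \ref{chirho}), and any minimizer $\rho_1$ works in the chain of inequalities above — the lemma's role is only to identify $\rho_1$ explicitly via super-level sets, which is what the numerical algorithm will exploit. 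I would present the clean version using Lemma \ref{bathtub1} and flag the regularity input as the one nontrivial ingredient.
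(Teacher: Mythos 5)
There is a genuine error in the core step: you invoked the wrong bathtub lemma, and as a result the displayed chain of inequalities is false. Since $\rho_1$ is chosen (via Lemma \ref{bathtub1}) as the \emph{minimizer} of $\int_\Om \rho\, u_0^2\,dx$ over $\mathcal{P}$, you have $\int_\Om \rho_1 u_0^2\,dx \le \int_\Om \rho_0 u_0^2\,dx$, and dividing the fixed numerator $\int_\Om (\Delta u_0)^2\,dx$ by the \emph{smaller} denominator gives
\begin{equation*}
\frac{\int_\Om (\Delta u_0)^2\,dx}{\int_\Om \rho_1 u_0^2\,dx} \;\ge\; \frac{\int_\Om (\Delta u_0)^2\,dx}{\int_\Om \rho_0 u_0^2\,dx}=\la(\rho_0),
\end{equation*}
which is the reverse of the second inequality you wrote. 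All that survives is $\la(\rho_1)\le \frac{\int_\Om (\Delta u_0)^2\,dx}{\int_\Om \rho_1 u_0^2\,dx}$, which gives no comparison between $\la(\rho_1)$ and $\la(\rho_0)$ at all. This is exactly the phenomenon the paper points out right after the theorem: the Lemma \ref{bathtub1} rearrangement yields $\la(\rho_0)\le \frac{\int(\Delta u_0)^2}{\int\rho_1 u_0^2} \ge \la(\rho_1)$, with no monotonicity in either direction, which is why the eigenvalue \emph{maximization} algorithm needs the acceptance--rejection/partial-swapping device of Theorem \ref{psmt}. Your physical heuristic is also inverted: to lower the basic frequency one puts the heaviest material where the displacement $u_0^2$ is \emph{large}, i.e.\ one \emph{maximizes} the mass term in the denominator of the Rayleigh quotient.

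The fix is small and brings you back to the paper's proof: take $\rho_1$ from Lemma \ref{bathtub2} with $f=u_0^2$, so that $\int_\Om \rho_1 u_0^2\,dx \ge \int_\Om \rho_0 u_0^2\,dx$; then $\la(\rho_1)\le \frac{\int_\Om(\Delta u_0)^2\,dx}{\int_\Om \rho_1 u_0^2\,dx}\le \frac{\int_\Om(\Delta u_0)^2\,dx}{\int_\Om \rho_0 u_0^2\,dx}=\la(\rho_0)$, and the same argument with $v_0$ and \eqref{ray2} gives $\rho^\prime_1$. Your alternative remark at the end (``any minimizer of $\int_\Om\rho u_0^2$ works in the chain above'') inherits the same sign error. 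The preliminary point about level sets of $u_0^2$ having measure zero is handled correctly and in essentially the same way as the paper, which rules out a zero set of positive measure via Theorem 4.4 of \cite{cuccu2} and then applies Lemma 7.7 of \cite{gilbarg} to $\Delta^2 u_0=\la\rho_0 u_0\neq 0$ a.e.; so that part of your write-up is fine.
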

\begin{proof}
We prove the assertion for equation \eqref{mpde1}. For  \eqref{mpde2}, the proof is similar to that of equation  \eqref{mpde1} and it is omitted.
Inserting $\rho_0$ as the density function
 into \eqref{mpde1}, we find $u_0$ as the eigenfunction of the
 equation corresponding to $\la_0=\la(\rho_0)$.

   We claim that the level sets of  $u_0$  have  measure zero.
As we mentioned, $u_0$ satisfies  equation \eqref{mpde1}  a.e. in $\Om$. This means
 \begin{equation*}
 \Delta^2 u_0= \lambda \rho u_0 ,\quad \mathrm{a.e.\:\:\:in}\quad\Omega.
 \end{equation*}
 If the set $E=\{x\in \Om:\quad u_0(x)=0\}$ has  positive measure then $u_0$ is identically zero by theorem 4.4 of \cite{cuccu2}.This yields that
 $\Delta^2 u_0\neq 0$ a.e. in $\Om$. Consequently,  the level sets of  $u_0$  have  measure zero applying lemma 7.7 of \cite{gilbarg}.

 Set $f(x)= u_0^2(x)$ in lemma \ref{bathtub2}, then one can achieve $\rho_1$ in  $\mathcal{P}$ such that
 \begin{equation*}
\int_{\Om} \rho_0  u_0^2dx\leq
\int_{\Om} \rho_1   u_0^2dx.
 \end{equation*}
Therefore,
\begin{equation*}
\frac{\int_{\Om}(\Delta u_0)^2dx}{\int_{\Om} \rho_0 u_0^2dx}
\geq \frac{\int_{\Om}(\Delta u_0)^2dx}{\int_{\Om} \rho_1  u_0^2dx},
 \end{equation*}
 then
 \begin{equation*}
\la( \rho_0)\geq \la( \rho_1),
\end{equation*}
based on \eqref{ray1}.
\end{proof}

 Utilizing  theorem \ref{decseq}, we can derive two decreasing sequences of eigenvalues
\begin{equation*}
\la(\rho_{n-1})\geq \la(\rho_{n}), \quad \Lambda(\rho_{n-1})\geq \Lambda(\rho_{n}).
\end{equation*}
Obviously, these  decreasing sequences are bounded   below by zero and  so they converge.
In view of theorem \ref{decseq}, we can propose an iterative procedure to find the minimal density configurations for the
eigenvalue minimization problems \eqref{minp1} and   \eqref{minp2}.

 The maximization problems are more complicated and  an iterative method cannot be derived by arguments similar to those in the minimization cases \cite{kao2}. If we consider $\rho_0$ as an arbitrary  function in  $\mathcal{P}$ and $u_0$ as an associated eigenfunction of  \eqref{mpde1},  then
 one can find a density function $\rho_1$ in $\mathcal{P}$ regarding lemma \ref{bathtub1} such that
 \begin{equation*}
\int_{\Om} \rho_0  u_0^2dx\geq
\int_{\Om} \rho_1   u_0^2dx,
 \end{equation*}
and then
\begin{equation*}
\la(\rho_0)\leq\frac{\int_{\Om}(\Delta u_0)^2dx}{\int_{\Om} \rho_0 u_0^2dx}
\leq \frac{\int_{\Om}(\Delta u_0)^2dx}{\int_{\Om} \rho_1  u_0^2dx}\geq \la(\rho_1).
 \end{equation*}
The same argumentation is valid for the principal eigenvalues of \eqref{mpde2} as well. Hence, we cannot produce an increasing sequence of eigenvalues since the next generated eigenvalue may be less than the previous one. Inspired by the method introduced in  \cite{kao2},  the strategy to guarantee a monotone increasing sequence is to add an acceptance rejection method. If this new eigenvalue increases the eigenvalue, the density function will be accepted. Otherwise, the partial swapping method will be used. Indeed, in the partial swapping method we use $\rho_1$ where $\delta \rho =\rho_1-\rho_0$ is small enough.
\begin{thm}\label{psmt}
Let $\rho_0,\:\rho_1$ be functions in $\mathcal{P}$ where
\begin{equation}\label{smallrho}
\int_{\Om} \rho_0  u_0^2dx>
\int_{\Om} \rho_1   u_0^2dx,
\end{equation}
and $\|\delta \rho\|_{\lt}$ is small enough. Then,
\begin{equation*}
\la(\rho_{0})<\la(\rho_{1}), \quad \Lambda(\rho_{0})< \Lambda(\rho_{1}).
\end{equation*}
\end{thm}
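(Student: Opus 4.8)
The plan is to treat $\la(\rho_1)$ as a perturbation of $\la(\rho_0)$ and show that the first-order term in the expansion is strictly positive, with the remainder controlled by $\|\delta\rho\|_{\lt}$. First I would recall that the principal eigenvalue $\la(\rho)$ is simple and isolated, so by standard analytic perturbation theory (Kato) the map $\rho\mapsto\la(\rho)$ is differentiable along the line segment $\rho_t=\rho_0+t\,\delta\rho$, $t\in[0,1]$, with
\[
\frac{d}{dt}\la(\rho_t)\Big|_{t=0}=-\la(\rho_0)\,\frac{\int_{\Om}\delta\rho\,u_0^2\,dx}{\int_{\Om}\rho_0 u_0^2\,dx},
\]
where $u_0$ is the normalized eigenfunction for $\rho_0$. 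By hypothesis \eqref{smallrho} the numerator $\int_{\Om}\delta\rho\,u_0^2\,dx=\int_{\Om}\rho_1 u_0^2\,dx-\int_{\Om}\rho_0 u_0^2\,dx<0$, and since $\la(\rho_0)>0$ and the denominator is positive, the derivative at $t=0$ is strictly positive. Hence for small $t$, $\la(\rho_t)>\la(\rho_0)$; the point is to get this at $t=1$, i.e. for $\rho_1$ itself, which requires the smallness of $\|\delta\rho\|_{\lt}$.

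Next I would make the perturbation quantitative rather than merely infinitesimal. Using $u_0$ as a test function in the Rayleigh quotient \eqref{ray1} for $\rho_1$ gives the cheap bound $\la(\rho_1)\le \int_{\Om}(\Delta u_0)^2dx\big/\int_{\Om}\rho_1 u_0^2dx$, which unfortunately points the wrong way (it is an upper bound). So instead I would use the eigenfunction $u_1$ of $\rho_1$ as a test function in the quotient for $\rho_0$: since $c_1\le\rho_i\le c_m$, all the relevant quantities are comparable, and one obtains
\[
\la(\rho_0)\le\frac{\int_{\Om}(\Delta u_1)^2dx}{\int_{\Om}\rho_0 u_1^2dx}
=\la(\rho_1)\,\frac{\int_{\Om}\rho_1 u_1^2dx}{\int_{\Om}\rho_0 u_1^2dx}
=\la(\rho_1)\Big(1-\frac{\int_{\Om}\delta\rho\,u_1^2dx}{\int_{\Om}\rho_0 u_1^2dx}\Big).
\]
Thus it suffices to show $\int_{\Om}\delta\rho\,u_1^2\,dx<0$, i.e. that the sign of $\int_{\Om}\delta\rho\,u_1^2$ agrees with that of $\int_{\Om}\delta\rho\,u_0^2$, which is negative by \eqref{smallrho}. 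This is where the smallness of $\|\delta\rho\|_{\lt}$ enters: I would show that $u_1\to u_0$ (strongly in $H^2$, hence in $L^4$ by Sobolev embedding, at least for $N\le 4$, or in the relevant $L^p$ more generally) as $\|\delta\rho\|_{\lt}\to 0$, via the uniform ellipticity bounds $c_1\le\rho\le c_m$ and continuity of the simple principal eigenpair. Then
\[
\Big|\int_{\Om}\delta\rho(u_1^2-u_0^2)\,dx\Big|\le\|\delta\rho\|_{\lt}\,\|u_1-u_0\|_{L^4}\,\|u_1+u_0\|_{L^4},
\]
and since $\|\delta\rho\|_{\lt}$ is bounded (the $c_i$ and $S_i$ are fixed) while $\|u_1-u_0\|_{L^4}\to 0$, this error is smaller than the fixed negative gap $\int_{\Om}\delta\rho\,u_0^2\,dx<0$ once $\|\delta\rho\|_{\lt}$ is small enough. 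Hence $\int_{\Om}\delta\rho\,u_1^2\,dx<0$ and the displayed inequality forces $\la(\rho_0)<\la(\rho_1)$.

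The argument for \eqref{mpde2} and $\Lambda$ is identical, working in $H^2_0(\Om)$ instead of $H^2(\Om)\cap\h$ and using \eqref{ray2}; the only structural facts used are simplicity of the principal eigenvalue, uniform ellipticity $0<c_1\le\rho\le c_m$, and continuous dependence of the eigenpair on $\rho$, all of which hold in both settings. The main obstacle is the second paragraph: turning the infinitesimal sign of the derivative into a genuine inequality at $\rho_1$, which hinges on the continuity estimate $\|u_1-u_0\|_{L^4}\to 0$ as $\|\delta\rho\|_{\lt}\to 0$. This requires a little care because $\delta\rho$ is only small in $L^2$, not in $L^\infty$; one routes around this through the a priori $H^2$ (indeed $H^4_{loc}$) bounds on the eigenfunctions coming from the fixed ellipticity constants, which keep the family $\{u_1\}$ precompact, and then identifies every limit point as $u_0$ by uniqueness of the simple principal eigenfunction. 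I would also note that the requirement that level sets of $u_0$ have measure zero — already established in the proof of Theorem~\ref{decseq} — is implicitly what makes \eqref{smallrho} a robust strict inequality rather than an equality.
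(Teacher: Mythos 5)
Your argument is essentially the paper's proof in multiplicative form. The paper normalizes $\|\Delta u_0\|_{\lt}=\|\Delta u_1\|_{\lt}=1$ and writes $1/\la(\rho_1)-1/\la(\rho_0)=\int_\Om\rho_0(u_1^2-u_0^2)\,dx+\int_\Om\delta\rho\,(u_1^2-u_0^2)\,dx+\int_\Om\delta\rho\,u_0^2\,dx$, where the first term is $\le 0$ because $u_1$ is admissible in the Rayleigh quotient for $\rho_0$, the last is $<0$ by \eqref{smallrho}, and the middle is of higher order once one knows $u_1\to u_0$ in $\lt$ as $\|\delta\rho\|_{\lt}\to 0$ (borrowed from Lemma 3.1 of Cuccu--Porru); your step of testing the $\rho_0$-quotient with $u_1$ and reducing to the sign of $\int_\Om\delta\rho\,u_1^2\,dx$ uses exactly the same two ingredients, just packaged as a ratio rather than a difference of reciprocals. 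Two small points. First, a sign slip: since $\delta\rho=\rho_1-\rho_0$, your identity should read $\la(\rho_1)\bigl(1+\int_\Om\delta\rho\,u_1^2\,dx\big/\int_\Om\rho_0 u_1^2\,dx\bigr)$; with the corrected sign, your stated sufficiency condition $\int_\Om\delta\rho\,u_1^2\,dx<0$ is indeed what is needed, so nothing downstream breaks. Second, your phrase ``fixed negative gap'' overstates what \eqref{smallrho} gives: $\int_\Om\delta\rho\,u_0^2\,dx$ is not fixed but shrinks with $\delta\rho$, and the hypothesis provides no quantitative lower bound on its magnitude, so the comparison with the $o(\|\delta\rho\|_{\lt})$ cross term is heuristic --- but this is precisely the same leap the paper itself makes when it compares the rates $O(\|\delta\rho\|^{1+s}_{\lt})$ and $O(\|\delta\rho\|_{\lt})$, so your proposal is at the paper's own level of rigor on that point.
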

\begin{proof}
We prove the assertion for equation \eqref{mpde1}. For  \eqref{mpde2}, the proof is similar to that of equation  \eqref{mpde1} and is omitted. Assume $u_0$ and $u_1$ are eigenfunctions of \eqref{mpde1} corresponding to $\rho_0$ and $\rho_1$ respectively where  normalized so that $\|\Delta u_0\|_{\lt}=\|\Delta u_1\|_{\lt}=1$. Applying \eqref{ray1}, we have
\begin{equation*}
\la(\rho_{0})=\frac{1}{\int_{\Om} \rho_0  u_0^2dx}\leq \frac{1}{\int_{\Om} \rho_0  u_1^2dx},
\end{equation*}
which it yields
\begin{equation}\label{thm2-1}
\int_{\Om} \rho_0  u_1^2dx \leq \int_{\Om} \rho_0  u_0^2dx.
\end{equation}
Then,
\begin{align*}
\frac{1}{\la(\rho_{1})}-\frac{1}{\la(\rho_{0})}=\int_{\Om} \rho_1  u_1^2dx-\int_{\Om} \rho_0  u_0^2dx\\
=\int_{\Om} \rho_1  u_1^2- \rho_1  u_0^2+ \rho_1  u_0^2- \rho_0  u_0^2dx\\
=\int_{\Om} (\rho_0+\delta \rho)(u_1^2-u_0^2) dx+ \int_{\Om} \delta \rho u_0^2 dx
\\=\int_{\Om} \rho_0(u_1^2-u_0^2) dx+\int_{\Om} \delta\rho(u_1^2-u_0^2) dx+\int_{\Om} \delta \rho u_0^2 dx.
\end{align*}
On the right hand side of the  last equality, we have three integrals where the first and the last one from the left are negative according to
\eqref{smallrho} and \eqref{thm2-1}. We claim that $\|u_1-u_0\|_{\lt}\rightarrow 0$ as $\|\delta \rho\|_{\lt}\rightarrow 0$. Then, one can observe that
the second integral converges to zero with the rate of convergence $O(\|\delta \rho\|^{1+s}_{\lt})$ for some  $s>0$  and the third integral converges to zero with the rate of convergence $O(\|\delta \rho\|_{\lt})$. Hence if  $\|\delta \rho\|_{\lt}$ is small enough, we can infer that the right hand side of the last equality is negative and
\begin{equation*}
\la(\rho_{0})<\la(\rho_{1}).
\end{equation*}

 It remains to prove the claim. Applying the dominate convergence theorem, we deduce  that if $\rho_0 \rightarrow  \rho_1$ strongly in $\lt$ then
 $\rho_0 \rightharpoonup  \rho_1$ weakly in $L^\infty(\Om)$. By the same reasoning, used in the proof of lemma 3.1 in \cite{cuccu}, it can be concluded that
 $u_0\rightharpoonup u_1$ weakly in $H^2(\Om)$ and $u_0\rightarrow u_1$ strongly in $\lt$.
\end{proof}
\begin{rem}\label{boundedseqpsm}
Let $\rho_0(x)=c_1\chi_{D_1}+...+c_m \chi_{D_m}$ be a function in $\mathcal{P}$. We derive  $\rho_1$ stated in \eqref{smallrho} in the following form
\begin{equation*}
\rho_1(x)=c_1\chi_{D_1}+...+ c_j \chi_{D^\prime_j}+...+ c_i \chi_{D^\prime_i}+...+c_m \chi_{D_m},
\end{equation*}
such that $D^\prime_j= (D_j-A)\cup B$ and $D^\prime_i= (D_i-B)\cup A$ where  $A$ is a subset of $D_j$ and $B$ is a subset of $D_i$
so that $|A|=|B|$. Then,
\begin{equation*}
\delta \rho=\rho_1-\rho_0=c_j(\chi_{B}-\chi_{A})+c_i(\chi_{A}-\chi_{B}),
\end{equation*}
and functions $\rho_1$, $\rho_0$ satisfies \eqref{smallrho} if
\begin{equation*}
\int_B u^2_0> \int_A u^2_0.
\end{equation*}
It is noteworthy  that    $\|\delta \rho\|_{\lt}$ will be small enough if one adjusts $|A|=|B|$ small enough. The sets $A$ and $B$ are selected by trail and error.
\end{rem}

\begin{rem}\label{boundedseq}
Utilizing lemma \eqref{bathtub1} and theorem \ref{psmt}, we can derive an increasing sequences of eigenvalues
\begin{equation*}
\la(\rho_{n-1})\leq \la(\rho_{n}), \quad \Lambda(\rho_{n-1})\leq \Lambda(\rho_{n}).
\end{equation*}
Using variational formulation \eqref{ray1} and \eqref{ray2}, it can be said that these sequences are bounded  above. Consider an entire density function $\rho$ in $\mathcal{P}$. Then from equation \eqref{ray1} or \eqref{ray2} and lemma \ref{bathtub1} we  conclude
\begin{equation*}
\la_{\rho}, \Lambda_{\rho}\leq \frac{\int_{\Om}(\Delta \psi)^2dx}{\int_{\Om}\rho \psi^2dx}\leq \frac{\int_{\Om}(\Delta \psi)^2dx}{\int_{\Om}\overline{\rho} \psi^2dx},
\end{equation*}
where $\psi$ is the eigenfunction associated with the principal
eigenvalue of the Laplacian with Dirichlet boundary conditions and  $\overline{\rho}$ is the minimizer stated in lemma \ref{bathtub1} for $f=\psi^2$. Consequently, the increasing sequences of eigenvalues are bounded above and are  convergent.
 \end{rem}

 \begin{table}[h]
\caption{}
\centering 
\begin{tabular}{ l} 
\hline 
\hline
\textbf{Algorithm $2$.} Eigenvalue minimization \\ [1 ex] 
\hline 
\hline
\textbf{Data:} An initial density function $\rho_0$  \\ 
\textbf{Result:} A sequence of decreasing eigenvalues $\la(\rho_n)$  \\
\textbf{$1$.} Set $n = 0$;  \\
\textbf{$2$.} Compute $u_{n}$ and $\la(\rho_n)$; \\
\textbf{$3$.} Compute $\rho_{n+1}$ applying lemma \ref{bathtub2}; \\
\textbf{$4$.} If $ \|\delta \rho\|_{\lt}< TOL$ then stop;\\
$\quad$ else\\
$\qquad$ $\qquad$  Set $n=n+1$;\\
 $\qquad$ $\qquad$ Go to step $2$;\\
 [1ex] 
\hline 
\end{tabular}
\label{table1} 
\end{table}
\section{Implementation of the numerical algorithms}\label{num}

 This section provides us with the details of the implementation for
algorithms introduced in the previous section  and some
examples are chosen to illustrate the numerical solutions. The algorithms work for both equations \eqref{mpde1}  and \eqref{mpde2} similarly and so we
state the procedures just for  \eqref{mpde1}.

 At iteration step $n$, there is a guess
for the configuration of the optimal density function where it  is denoted by $\rho_n$. We use the finite
element method with  piecewise linear basis
functions to discretize equation \eqref{mpde1} with $\rho_n$.
Let $u_n$ be an
eigenfunction of \eqref{mpde1}  associated with
eigenvalue $\la_n= \la(\rho_n)$. For  minimization problem \eqref{minp1},
 we should extract a new density function $\rho_{n+1}$  based upon the level sets of eigenfunction $u_n$ where it belongs to $\mathcal{P}$
and $\la(\rho_{n})> \la(\rho_{n+1})$. To derive this $\rho_{n+1}$,
 we make use of  lemma  \ref{bathtub2} and identify $\rho_{n+1}$ by setting $f(x)=u_n^2(x)$.
  According to theorem \ref{decseq}, we have $\la(\rho_{n})> \la(\rho_{n+1})$ and the
       generated sequence is convergent. The resulting algorithm is shown in table 1. There is a stopping criterion in this method.
        The algorithm  stops
         when $\|\delta \rho\|_{\lt}=\| \rho_{n+1}-\rho_{n}\|_{\lt}$ is less than a prescribed tolerance $TOL$.

\begin{table}[h]
\caption{}
\centering 
\begin{tabular}{ l} 
\hline 
\hline
\textbf{Algorithm $1$.} Eigenvalue maximization \\ [1 ex] 
\hline 
\hline
\textbf{Data:} An initial density function $\rho_0$  \\ 
\textbf{Result:} A sequence of increasing eigenvalues $\la(\rho_n)$  \\
\textbf{$1$.} Set $n = 0$;  \\
\textbf{$2$.} Compute $u_{n}$ and $\la(\rho_n)$; \\
\textbf{$3$.} Compute $\rho_{n+1}$ applying lemma \ref{bathtub1}; \\
\textbf{$4$.} Compute $\la (\rho_{n+1})$;\\
\textbf{$5$.} If $\la (\rho_{n})< \la (\rho_{n+1})$ then go to step $6$;\\
$\quad$ else\\
$\qquad$ $\qquad$  Compute $\rho_{n+1}$ applying  remark \ref{boundedseqpsm};\\
\textbf{$6$.} If $\|\delta \rho\|_{\lt}| < TOL$ then stop;\\
$\quad$ else\\
$\qquad$ $\qquad$  Set $n=n+1$; Go to step $2$;\\
 [1ex] 
\hline 
\end{tabular}
\label{table1} 
\end{table}

For  maximization problem \eqref{maxp1},
 we should extract a new density function $\rho_{n+1}$   where it belongs to $\mathcal{P}$
and $\la(\rho_{n})< \la(\rho_{n+1})$. To derive this $\rho_{n+1}$,
 we make use of  lemma  \ref{bathtub1} and identify $\rho_{n+1}$ by setting $f(x)=u_n^2(x)$. If $\la(\rho_{n})< \la(\rho_{n+1})$, the derived  function
$\rho_{n+1}$  is accepted. Otherwise, the partial swapping method introduced in theorem \ref{psmt} and remark \ref{boundedseqpsm} is used to generate the new $\rho_{n+1}$. The resulting algorithm is shown in table 2. According to remark \ref{boundedseq}, this increasing sequence is convergent.

In the above algorithms we need to calculate parameters $t_1..t_{m-1}$ which is mentioned in lemma \ref{bathtub1} or \ref{bathtub2}. In order to find $t_1..t_{m-1}$, two algorithms are developed which they apply the idea of the bisection method. Both algorithms are the same in essence and we only state the algorithm related to the maximization problem.  Introducing the distribution function $F(s)=|\{x\in \Om: \quad u^2_n(x)> s \}|$, we state algorithm 3  in table 3 to compute $t_1..t_{m-1}$. Again, we should consider a tolerance $TOL$ in this algorithm since it is meaningless computationally to find a set $D_k$ satisfying $|D_k|=S_k$ exactly.

\begin{table}[h]
\caption{}
\centering 
\begin{tabular}{ l} 
\hline 
\hline
\textbf{Algorithm $3$.} Bisection method for $t_1..t_{m-1}$ \\ [1 ex] 
\hline 
\hline
\textbf{Data:} Eigenfunction $u_n$ on the domain $\Om$ \\ 
\textbf{Result:} The level $t_1..t_{m-1}$  \\
\textbf{For $i=1..m-1$ do} \\
\textbf{$1$.} Set $L= 0, \quad U=\underset{x\in\Om}{\max}\,\, u^2_n(x)$  \\
\textbf{$2$.}  Set  $\theta=(L+U)/2$; \\
\textbf{$3$.} If $|F(\theta)-S_i|< TOL$ then set $t_i=\theta$ and  $\Om=\Om\backslash D_i$; \\
$\quad$else \\
\,\,\,\,\,$\quad$ If $F(\theta)<S_i$ then \\
\,\,\,\,\,$\quad$ $\quad$ Set $U=\theta$; Go to step 2;\\
\,\,\,\,\,$\quad$ else\\
\,\,\,\,\,$\quad$ $\quad$ Set $L=\theta$; Go to step 2;\\
 [1ex] 
\hline 
\end{tabular}
\label{table1} 
\end{table}

  \begin{figure}[h]\label{figmax-1}
  \centering
  \subfloat[$\lambda_{\max}=1.51$]{\label{figmax-1:r}\includegraphics[width=0.2\textwidth]{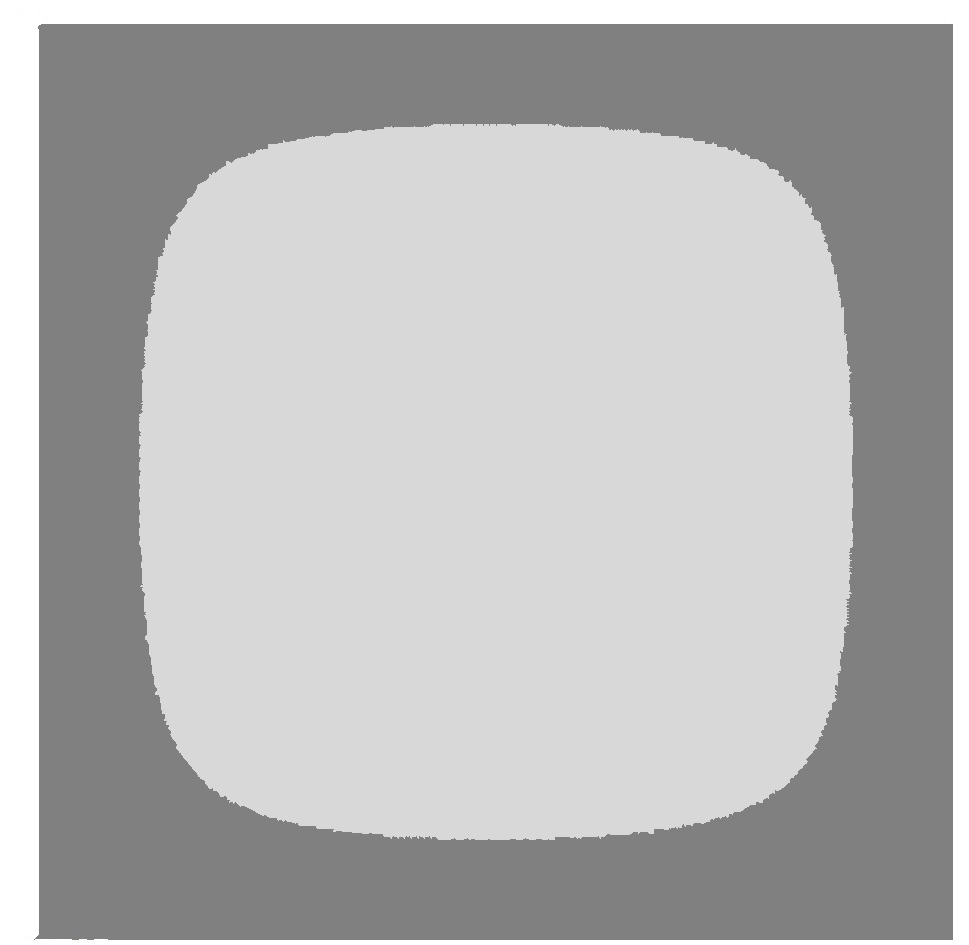}}
  \subfloat[$\lambda_{\max}=6.28$]{\label{figmax-1:c}\includegraphics[width=0.22\textwidth]{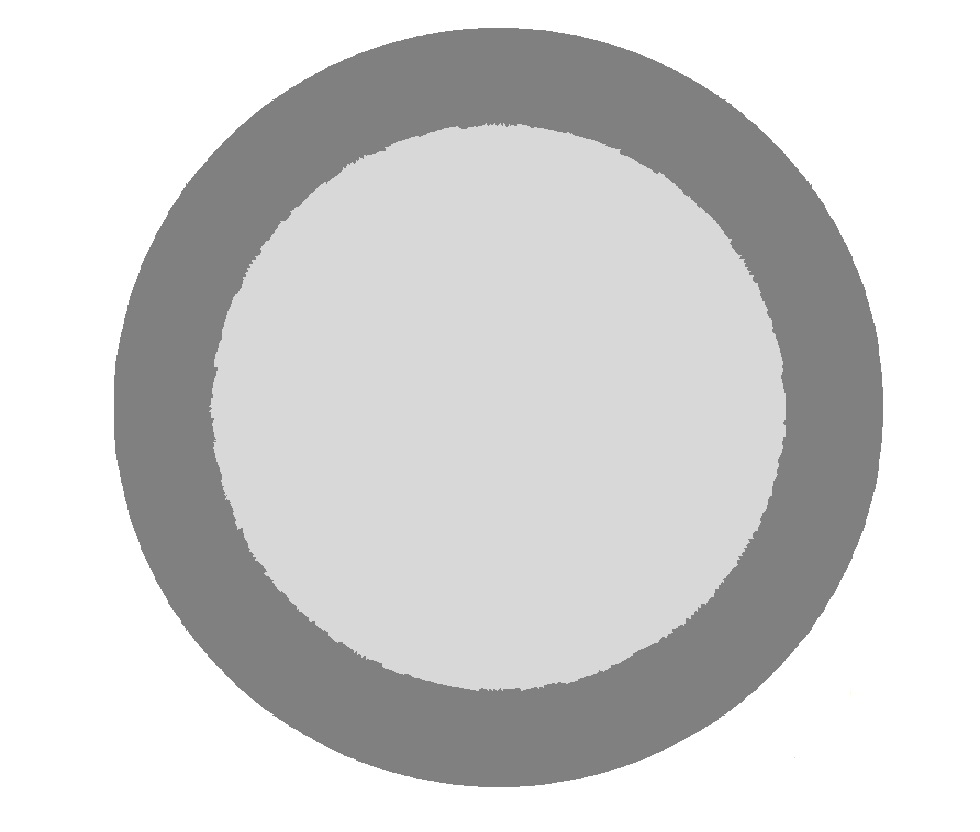}}
    \caption{The maximizer sets in black}
  \label{figmax-1}
\end{figure}

\begin{figure}[h]\label{figmax-2}
  \centering
  \subfloat[$\lambda_{\max}=4.51$]{\label{figmax-1:e}\includegraphics[width=0.23\textwidth]{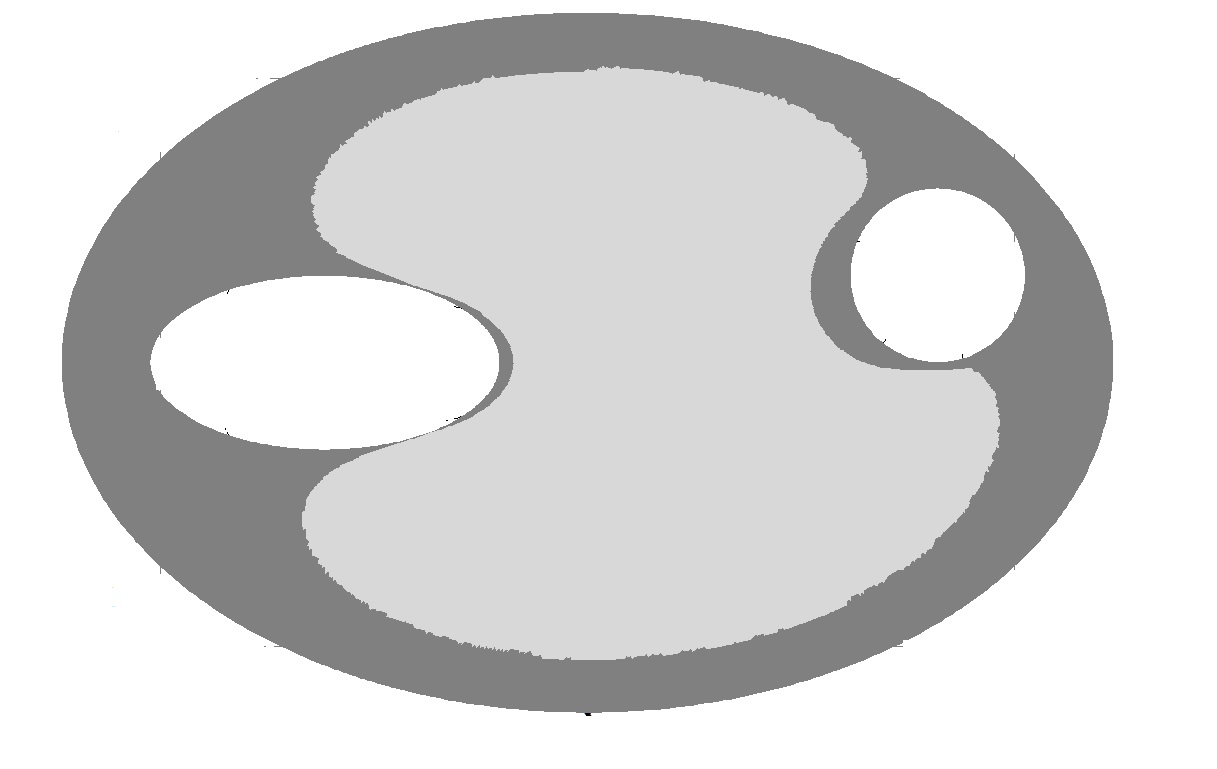}}
  \subfloat[$\lambda_{\max}=146.23$]{\label{figmax-1:cr}\includegraphics[width=0.2\textwidth]{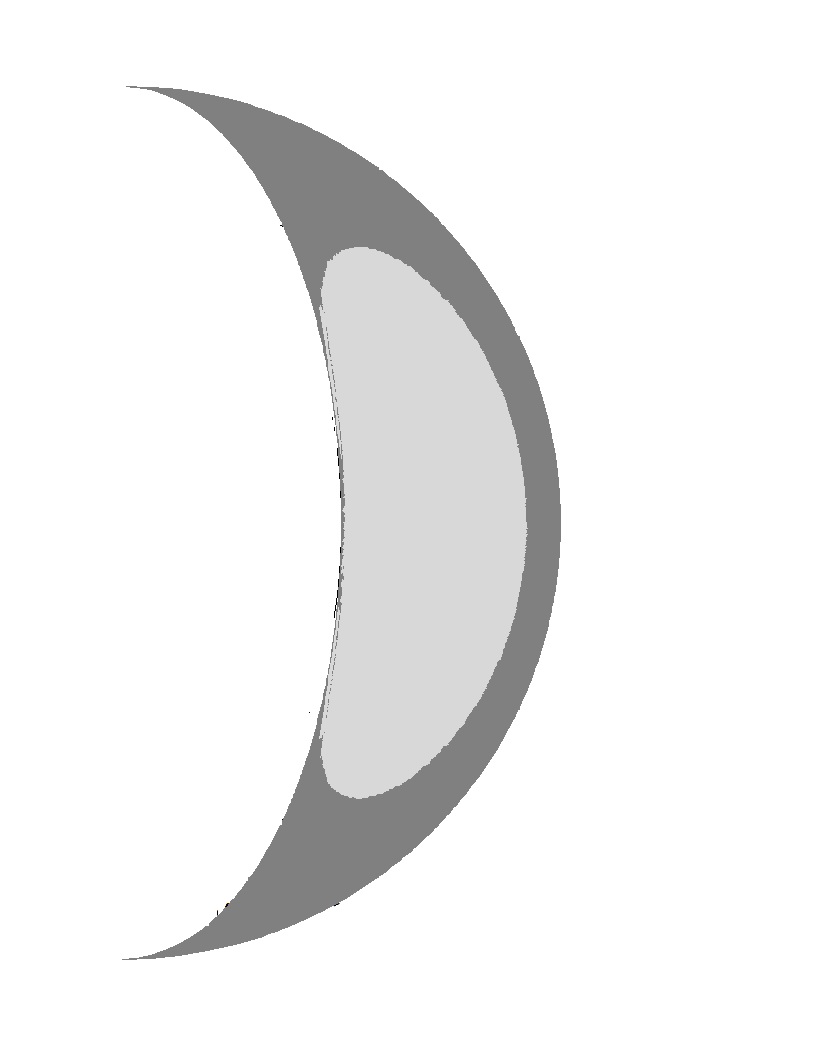}}
    \caption{The maximizer sets in black}
  \label{figmax-2}
\end{figure}

 \begin{figure}[t]\label{figmin-1}
  \centering
  \subfloat[$\lambda_{\min}=0.90 $]{\label{figmin-1:r}\includegraphics[width=0.2\textwidth]{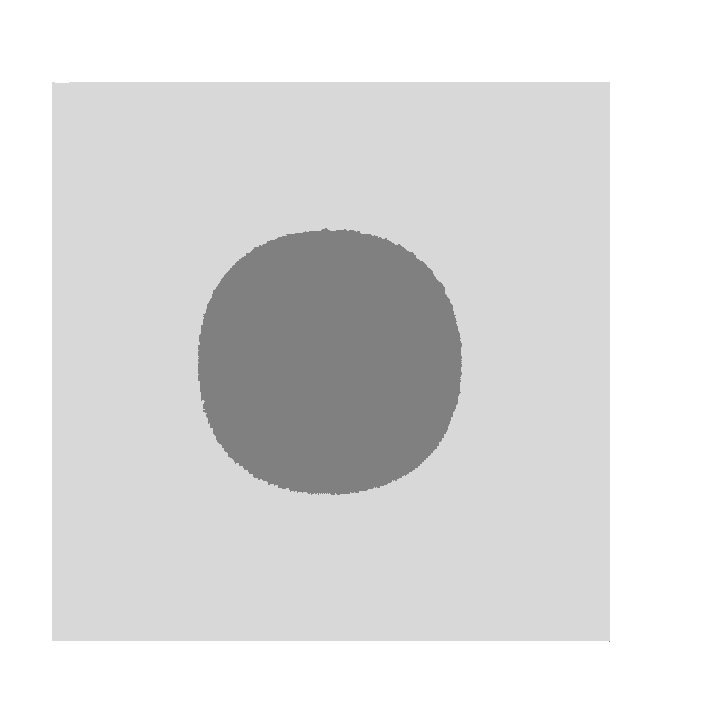}}
  \subfloat[$\lambda_{\min}=1.24 $]{\label{figmin-1:c}\includegraphics[width=0.2\textwidth]{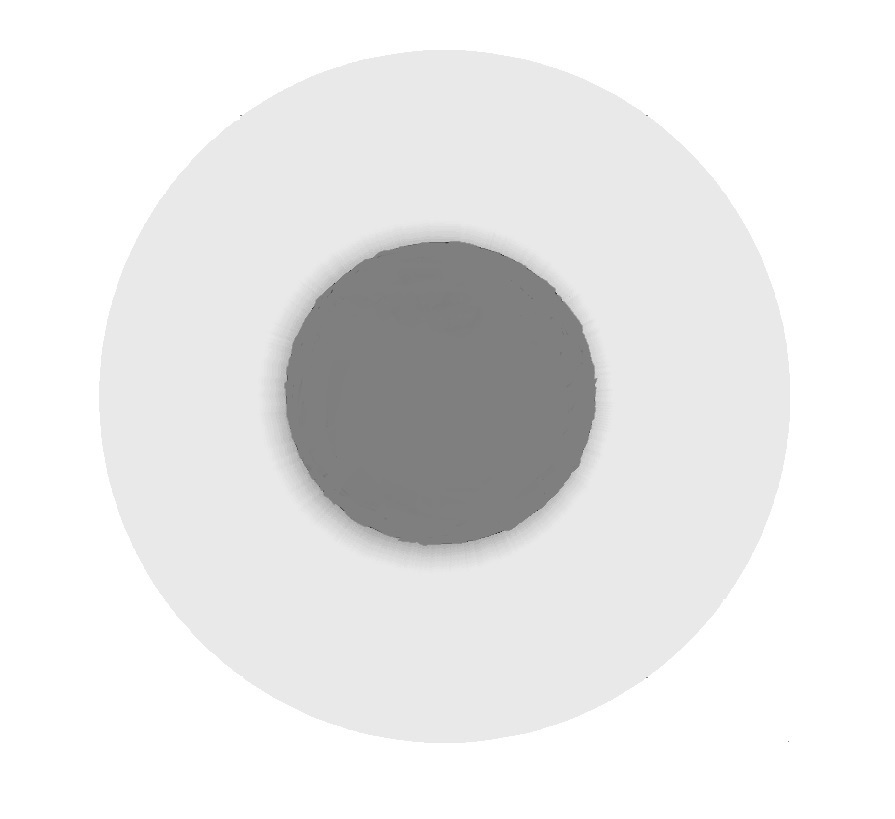}}
    \caption{The minimizer sets in black}
  \label{figmin-1}
\end{figure}

\begin{figure}[t]\label{figmin-2}
  \centering
  \subfloat[$\lambda_{\min}=2.58$]{\label{figmin-2:e}\includegraphics[width=0.23\textwidth]{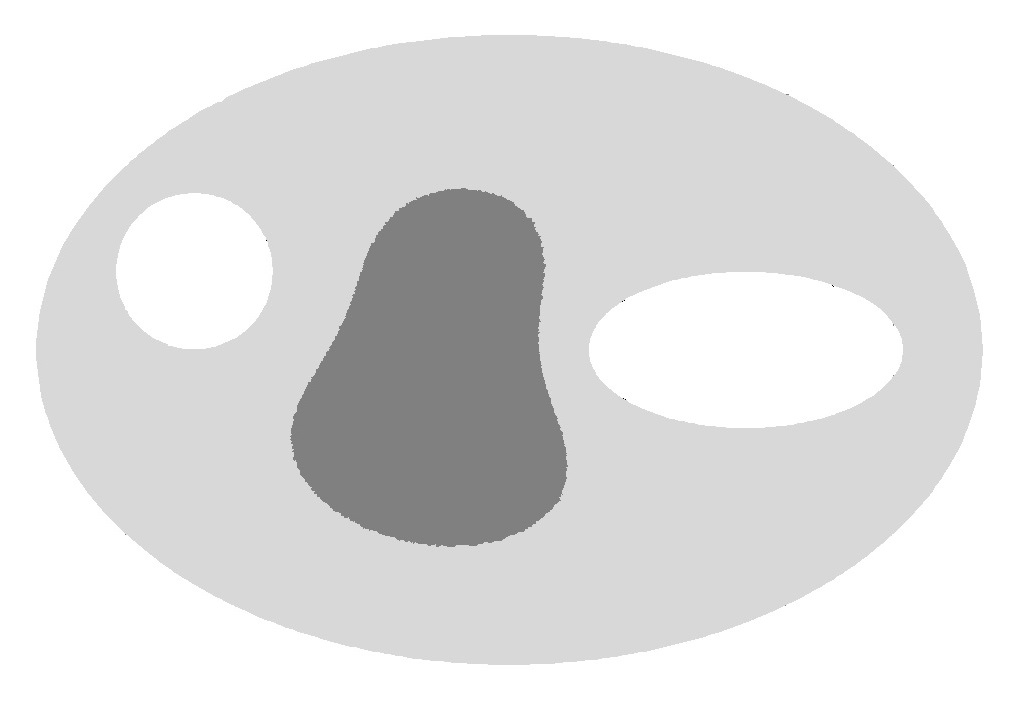}}
  \subfloat[$\lambda_{\min}=81.11$]{\label{figmin-2:cr}\includegraphics[width=0.2\textwidth]{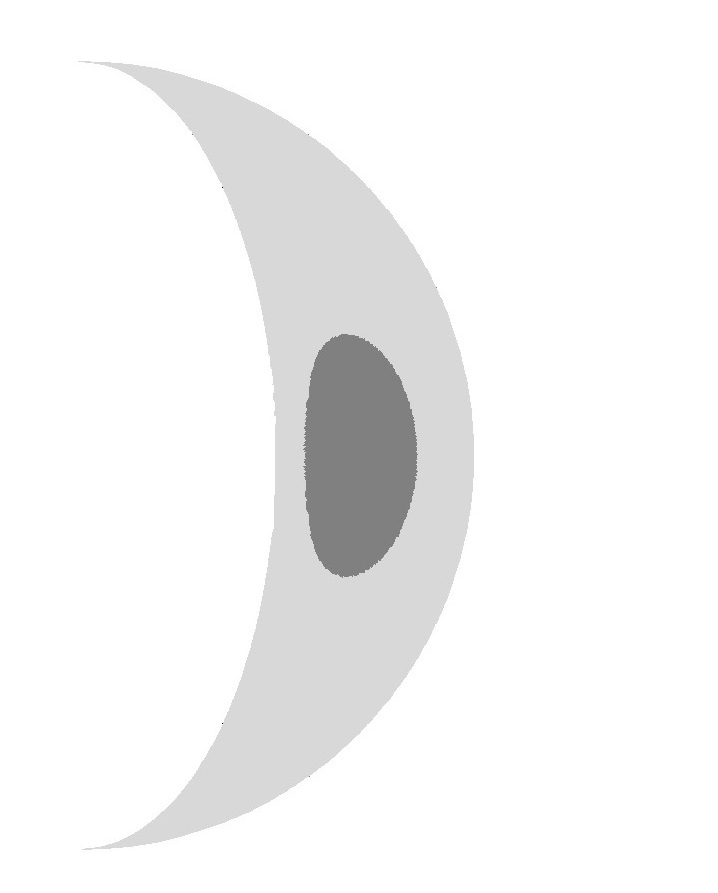}}
   \caption{The minimizer sets in black}
  \label{figmin-2}
\end{figure}

  Let us present some results in dimensions $N=2$ based on algorithms 1 and 2. \\

 \textbf{ Example 1.} Consider a hinged non-homogeneous plate $\Omega$  which it is made of two different materials. We want to find the solutions of optimization problems \eqref{maxp1}- \eqref{minp1}.
  Set $c_1=1$ and  $c_2=2$, we illustrate the optimum sets in cases rectangle, circle, ellipse and crescent   such that $|\Om|=16.00, 6.28, 16.49, 6.28$ and $S_2=4.00, 3.14, 4.00, 0.78$ respectively.  Remember that  our aim is to locate these two materials  throughout $\Omega$ so to optimize  the first eigenvalue
 in the vibration of the corresponding plate.
 Distribution of these materials as the level sets of the maximal density functions are plotted  in figures \ref{figmax-1} and \ref{figmax-2} for various geometries $\Om$. The sets with the highest density are depicted in black. These shapes reveal that the maximal distribution of the materials consists of putting the material with the highest density in a neighborhood of the boundary. In figures \ref{figmin-1} and   \ref{figmin-2}, distribution of these materials as the level sets of the minimal density functions are plotted.
   Physically speaking, in order  to minimize the basic frequency of the hinged non-homogeneous plates it is best to place the material with the highest density in a region in the center of the domain.

  \begin{figure}[t]\label{figmax-clamp}
  \centering
  \subfloat[$\lambda_{\max}=654.16$]{\label{figmax-clamp:r}\includegraphics[width=0.14\textwidth]{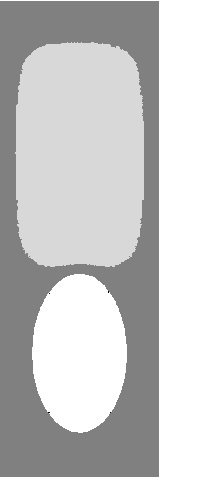}}
  \subfloat[$\lambda_{\max}=6.51$]{\label{figmax-clamp:c}\includegraphics[width=0.22\textwidth]{circle-max.jpg}}
    \caption{The maximizer sets in black}
  \label{figmax-clamp}
\end{figure}

    \begin{figure}[t]\label{figmin-clamp}
  \centering
  \subfloat[$\lambda_{\min}=327.41 $]{\label{figmin-clamp:r}\includegraphics[width=0.14 \textwidth]{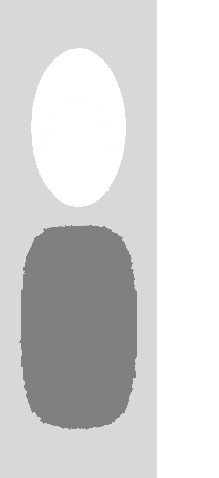}}
  \subfloat[$\lambda_{\min}=3.63 $]{\label{figmin-clamp:c}\includegraphics[width=0.2\textwidth]{circle-min-3.jpg}}
    \caption{The minimizer sets in black}
  \label{figmin-clamp}
\end{figure}

 \begin{figure}[t]\label{threematerials}
  \centering
  \subfloat[$\lambda_{\max}=10.33 $]{\label{threematerials:max}\includegraphics[width=0.177\textwidth]{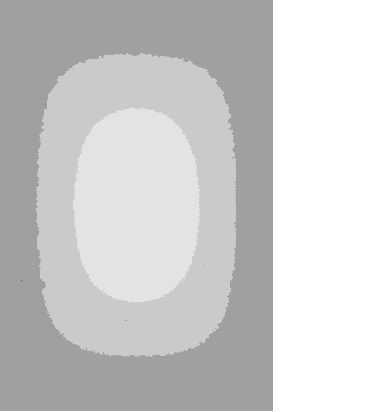}}
  \subfloat[$\lambda_{\min}=4.56 $]{\label{threematerials:min}\includegraphics[width=0.135\textwidth]{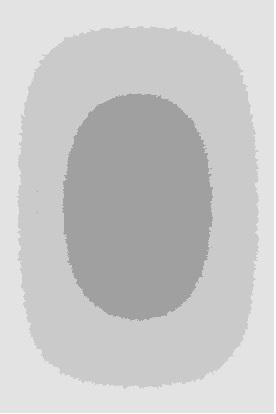}}
    \caption{Rectangular plate with three different materials }
  \label{threematerials}
\end{figure}

   \textbf{ Example 2.} Consider a clamped non-homogeneous plate $\Omega$  that is made of two different materials with densities $c_1=1$ and  $c_2=2$. This means that we should deal with the solutions of optimization problems \eqref{maxp2}- \eqref{minp2}. The optimum sets are illustrated when $\Om$ is a circle and
   a rectangle with a hole   such that $|\Om|=6.28, 2.52   $ and $S_2= 3.14, 1.12 $ respectively. The distribution of these materials as the level sets of the maximal and the minimal density functions are plotted  in figures \ref{figmax-clamp} and \ref{figmin-clamp}. The sets with the highest density are depicted in black. These shapes reveal that the maximal distribution of the materials consists of putting the material with the highest density in a neighborhood of the boundary and the minimal distribution consists of putting the material with the highest density in the center of the region.

 \textbf{ Example 3.} Let $\Om$ be a hinged plate which is made of three different materials with densities $c_1=1$, $c_2=2$ and $c_3=3$. The optimum sets are depicted when $\Om$ is a rectangle with $|\Om|=6$ and $S_i=2, i=1..3$. The locations of these materials in $\Om$ as the level sets of the maximal and minimal density functions are shown in figure \ref{threematerials:max} and \ref{threematerials:min}. According to this optimum density functions, one can discover that the maximal distribution of the materials consists of putting the material with the highest density  around the boundary and the material with the lowest density in the center of $\Om$. In addition, the minimal distribution of the materials consists  of putting the material with the lowest density  around the boundary and the material with the highest density in the center of $\Om$.

\begin{rem}
  In our numerical tests, the procedures typically converge to the global optimal
set of the respective problems, although this has not been
established theoretically that the derived sets are the global optimizers.
It is noteworthy  that    the algorithms may  stick to a local minimizer. This is, the main drawback of such algorithms, see \cite{kao2,lurian}. To overcome this problem, a usual way is to run the algorithms with different initializers . Then, one can compare the derived optimizers and choose the best one.

 When $\Om$ is a ball, the numerical algorithm converges  to the Schwartz increasing rearrangement of $\rho_0$ in the maximization problem. For the minimization problem,  the algorithm  converges  to the  Schwartz decreasing rearrangement of $\rho_0$. Indeed, both procedures converge to the optimal solutions derived analytically in \cite{cuccu,cuccu2}.
 Of course, our results derived in the above examples agree with  physical intuition.

\end{rem}


\end{document}